\newtheorem{theorem}{Theorem}[section]
\newtheorem{lemma}[theorem]{Lemma}
\theoremstyle{definition}
\theoremstyle{remark}
\numberwithin{equation}{section}
\newcommand{\Z}{\ensuremath{\mathbb{Z}}}
\definecolor{amaranth}{rgb}{0.9, 0.17, 0.31} 
\definecolor{carrotorange}{rgb}{0.93, 0.57, 0.13} 
\definecolor{citrine}{rgb}{0.89, 0.82, 0.04} 
\definecolor{dartmouthgreen}{rgb}{0.05, 0.5, 0.06} 
\definecolor{ballblue}{rgb}{0.13, 0.67, 0.8} 
\definecolor{ceruleanblue}{rgb}{0.16, 0.32, 0.75} 
\definecolor{amethyst}{rgb}{0.6, 0.4, 0.8} 
\definecolor{amber}{rgb}{1.0, 0.75, 0.0} 
\definecolor{burlywood}{rgb}{0.87, 0.72, 0.53} 
\begin{document}

\title[Asymmetric L-space knots of braid index four]{On asymmetric hyperbolic L--space knots of braid index four}


\author[K. Baker]{Kenneth L. Baker}
\address{Department of Mathematics, University of Miami, Coral Gables, FL 33146, USA }
\email{k.baker1@miami.edu}

\author[M. Teragaito]{Masakazu Teragaito}
\address{Department of Mathematics and Mathematics Education, Hiroshima University,
1-1-1 Kagamiyama, Higashi-hiroshima 7398524, Japan.}
\email{teragai@hiroshima-u.ac.jp}
\thanks{
The first author has been partially supported by the Simons Foundation gift \#962034.
The  second author has been partially supported by JSPS KAKENHI Grant Number JP25K07004.}

\subjclass[2020]{Primary 57K10}

\date{\today}



\begin{abstract}
A knot is called an L--space knot if it admits a positive Dehn surgery yielding
an L--space.
In the SnapPy census, there are exactly 9 asymmetric L--space knots.
Among them, the knot $t12533$ is the only known example of braid index $4$.
We generalize this knot, and give the first infinite family
of asymmetric hyperbolic L--space knots of braid index $4$.
\end{abstract}

\keywords{asymmetric L--space knot, braid index}
\maketitle


\section{Introduction}\label{sec:intro}

A knot is called an \textit{L--space knot\/} if it admits a positive Dehn surgery yielding an L--space.
If the symmetry group of the knot complement is trivial, then
a knot is said to be \textit{asymmetric}.
All the initially known examples of L--space knots were strongly invertible, but
Baker and Luecke \cite{BL} discovered infinitely many asymmetric hyperbolic L--space knots.
Later, it is verified that there are exactly 9 asymmetric L--space knots among 632 L--space knots in Dunfield's list \cite{ABG,BK,D1,D2}.
Among them, the knot $t12533$ is the only one of braid index $4$.
(For L--space knots, it is known that if its braid index is less than $4$, then it is strongly invertible.  See \cite[Section 1]{ABG}.)
Hence it is asked in \cite[Question 9]{ABG} whether there exist other asymmetric L--space knots of braid index $4$.

The purpose of this paper is to give a positive answer to this question.

\begin{theorem}\label{thm:main}
There are infinitely many asymmetric hyperbolic L--space knots of braid index $4$.
\end{theorem}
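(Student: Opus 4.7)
The plan is to exhibit an explicit one--parameter family $\{K_n\}$ generalizing $K_1 = t12533$ and verify the required properties uniformly in $n$. I would start by realizing $t12533$ as the closure of a specific 4--braid $\beta$, read off from Dunfield's census, and then introduce a parameter by inserting $n$ full twists of two adjacent strands at a carefully chosen position, producing 4--braids $\beta_n$ with closures $K_n$. Equivalently, one identifies an unknotted axis $c$ in the complement of $t12533$ whose $-1/n$ Dehn surgery takes $t12533$ to $K_n$; the geometry of the parent two--component link $L = t12533 \cup c$ then controls all subsequent arguments. The position of the twisting region should be chosen so that $L$ has a hyperbolic, symmetry--free complement and so that a known L--space surgery on $t12533$ extends nicely to a surgery on $L$.

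The upper bound on braid index is immediate from the construction. For the lower bound I would compute the HOMFLY polynomial of $K_n$ (doable uniformly in $n$ via the skein relation applied at the twist region) and invoke the Morton--Franks--Williams inequality: the $v$--spread should stabilize to $6$, forcing the braid index to equal $4$. Hyperbolicity of $K_n$ for all but finitely many $n$ follows from Thurston's hyperbolic Dehn surgery theorem once the complement of $L$ is checked to be hyperbolic (e.g., verified rigorously in SnapPy). Asymmetry is proved by observing that any nontrivial symmetry of $S^3 \setminus K_n$ would, for large $n$, descend to a symmetry of $S^3 \setminus L$ preserving the surgery slope on $c$; the known asymmetry of $t12533$ together with the rigidity of the shortest geodesic in $S^3\setminus L$ rules this out.

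The delicate step, and in my view the main obstacle, is verifying the L--space surgery property for the entire family. The plan here is a twist--family argument in the spirit of Baker--Luecke \cite{BL}: identify a slope $r$ on the $t12533$--component and the slopes $-1/n$ on $c$ such that filling $S^3 \setminus L$ with these slopes yields an L--space for every $n$. One way to make this work is to exhibit the Dehn filling of $S^3 \setminus L$ along $r$ alone as a Seifert fibered space (or more generally a graph manifold whose pieces satisfy the Hanselman--J.~Rasmussen--S.~Rasmussen--Watson gluing criterion); twisting then produces an L--space for all $n$, and the induced slope on $K_n$ depends linearly on $n$. Making the Seifert or graph--manifold structure explicit from a concrete surgery description of $L$ is the technical heart of the paper.

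Finally, that infinitely many \emph{distinct} knots arise among the $K_n$ is handled by hyperbolic volume: the volumes $\mathrm{vol}(S^3 \setminus K_n)$ converge monotonically from below to $\mathrm{vol}(S^3 \setminus L)$ and are therefore pairwise distinct for all sufficiently large $n$. This, together with the three properties above, yields Theorem~\ref{thm:main}.
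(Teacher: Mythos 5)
Your overall architecture matches the paper's: an explicit twist family $K_n$ obtained from a two--component link $L$ by $-1/n$ surgery on an unknot $c$ (the paper twists $K_0=m239$, with $K_1=t12533$, via the braid word ending in $2^{2n}$), braid index $4$ from the Morton--Franks--Williams bound applied to a positive braid containing a full twist, hyperbolicity and asymmetry for large $n$ from SnapPy verification of the link exterior plus effective Dehn filling (Futer--Purcell--Schleimer, via the normalized-length threshold), and distinctness of the $K_n$ (the paper uses the genus $g(K_n)=n+11$ from braid positivity rather than volumes, but either works).

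The gap is in the step you yourself flag as the technical heart: the L--space property for all $n$. Your proposed mechanism --- realize the filling of $S^3\setminus L$ along $r$ alone as a Seifert fibered space or a graph manifold and invoke the graph--manifold gluing criterion --- is not available here: that filling is a one--cusped filling of a hyperbolic two--cusped manifold and is itself (generically) hyperbolic, so no Seifert or graph structure exists to exploit, and nothing in your outline produces one. The paper's actual mechanism is different and is the key missing idea. Choose $r=29\geq 2g(K_0)-1$, so $K_0(29)$ is an L--space by Ozsv\'ath--Szab\'o; show by an explicit Kirby calculus computation that $(K_0\cup c)(29,0)$ is the lens space $L(4,-1)$. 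Then the image $c_{29}$ of $c$ in the L--space $K_0(29)$ has two L--space filling slopes, $\infty$ and $0$, so by Rasmussen--Rasmussen its set of L--space slopes is an interval disjoint from the homological longitude; a homological computation shows that longitude has slope $w^2q/p>0$ (where $r=p/q$ and $w=\ell k(K_0,c)$), hence every negative slope, in particular every $-1/n$, lies in the interval, and $(K_0\cup c)(29,-1/n)=K_n(29+w^2n)$ is an L--space. Without this ``two L--space fillings plus the L--space interval theorem'' argument --- or some concrete substitute for it --- your plan does not establish that the $K_n$ are L--space knots, so the proof as proposed is incomplete.
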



We will exhibit an infinite family of such knots.
For each integer $n\ge 1$, consider a positive $4$--braid:
\begin{equation*}\label{eq:braid2}
\beta_n=[(1,2,3)^4,2,1,3,2,2,1,1,2,1,1,1,1,1,2^{2n}],
\end{equation*}
where an integer $i$ denotes the standard generator $\sigma_i$ in the braid group $B_4$ of $4$ strands.
Let  $K_n$ be the closure of $\beta_n$.
Note that $(1,2,3)^{4}$ corresponds to a positive full twist on the $4$--strands.
In particular, $K_1$ is the knot $t12533$.
Since $K_n$ is the closure of a positive braid, it is fibered (\cite{S}), and 
so one may calculate that
$K_n$ has genus $n+11$.
Hence $K_m\ne K_n$ if $m\ne n$.

Thus Theorem \ref{thm:main} immediately follows from
the following.

\begin{theorem}\label{thm:main2}
Let $K_n$ be the knot defined as above.
Then $K_n$ is an asymmetric, hyperbolic L--space knot of braid index $4$ for any $n\ge 1$.
\end{theorem}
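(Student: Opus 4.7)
The knots $\{K_n\}_{n\ge 1}$ form a twist family: in the final block $\sigma_2^{2n}$ of $\beta_n$, one can place an unknotted axis $c$ encircling the two strands acted on by $\sigma_2$, so that $K_n$ is obtained from $K_1 = t12533$ by $-1/(n-1)$ Dehn surgery on $c$. The plan is to treat the four required properties — braid index $4$, hyperbolicity, the L--space property, and asymmetry — one by one, with the two-component link $L = K_1 \cup c$ as the central object.

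The first two properties are comparatively standard. Since $\beta_n$ is a positive $4$--braid containing the full twist $(1,2,3)^4$ on all four strands, the Morton--Franks--Williams inequality applied to the HOMFLY polynomial is sharp and pins the braid index to $4$. Fiberedness, genus, and the distinctness of the $K_n$ have already been recorded. For hyperbolicity I would rigorously check (for instance via SnapPy with interval arithmetic) that $S^3 \setminus L$ is hyperbolic, apply Thurston's hyperbolic Dehn surgery theorem to conclude that $K_n$ is hyperbolic for all but finitely many $n$, and then dispose of the remaining cases directly by excluding torus and satellite structures using Alexander polynomial data and positivity of the braid.

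The main obstacle is exhibiting an L--space surgery on each $K_n$. The plan is a persistence-under-twisting argument in the spirit of Baker--Luecke~\cite{BL}: fix a slope $r_1$ on $K_1$ that is known from Dunfield's data to produce an L--space, and let $\ell = \mathrm{lk}(K_1, c)$. On the link $L$, the combined surgery $(r_1, -1/(n-1))$ on $(K_1, c)$ gives the same closed manifold as the $\bigl(r_1 + (n-1)\ell^2\bigr)$--surgery on $K_n$. Using the Heegaard Floer surgery exact triangle together with the fact that positive L--space knots admit L--space surgeries for every integer slope $\ge 2g-1$, one argues inductively that these shifted slopes — which stay safely above $2g(K_n) - 1 = 2n + 21$ — are all L--space slopes for $K_n$. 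Making this induction rigorous, and in particular verifying the Heegaard Floer input both for the base case and for the ambient link $L$, is the technical heart of the argument.

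For asymmetry, I would combine Mostow rigidity with a geometric convergence argument: by Thurston's hyperbolic Dehn surgery theorem, the hyperbolic structures on $S^3 \setminus K_n$ converge geometrically to that of $S^3 \setminus L$, so any nontrivial isometry of $S^3 \setminus K_n$ present for infinitely many $n$ would limit to an isometry of $S^3 \setminus L$ preserving each component setwise. A rigorous computation of the isometry group of $S^3 \setminus L$ (again via SnapPy's certified routines) shows there is no such isometry, yielding asymmetry of $K_n$ for all sufficiently large $n$. The base case $K_1 = t12533$ is already known to be asymmetric, and any finitely many exceptional $n$ left over from the limiting argument can be handled individually by the same rigorous computation.
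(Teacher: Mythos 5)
Your overall architecture matches the paper's: realize $\{K_n\}$ as a twist family along an unknotted axis $c$ encircling the two strands of the $\sigma_2^{2n}$ block, get braid index $4$ from sharpness of the Morton--Franks--Williams bound for twist-positive braids, and handle hyperbolicity, asymmetry, and the L--space property via the link $L$. (The paper anchors the family at $K_0 = m239$ rather than $K_1$, but that is cosmetic.) However, there are two genuine gaps. First, your L--space argument is not anchored. The persistence-under-twisting scheme --- whether run through the surgery exact triangle or, as the paper does, through the Rasmussen--Rasmussen theorem that L--space filling slopes of the axis form an interval disjoint from the homological longitude --- requires as input that \emph{both} fillings $(K\cup c)(r,\infty)=K(r)$ \emph{and} $(K\cup c)(r,0)$ are L--spaces for some $r\ge 2g(K)-1$. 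The first is free from \cite{OS3}; the second is the entire computational content of the step, and you neither identify it nor supply it. The paper proves it by an explicit Kirby calculus computation showing $(K_0\cup c)(29,0)\cong L(4,-1)$. Labeling this "the technical heart" without specifying what must be verified, or how, leaves the induction with no base and the proof incomplete.

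Second, your hyperbolicity and asymmetry arguments are non-effective. Thurston's hyperbolic Dehn surgery theorem and the geometric-convergence argument for isometry groups each yield the conclusion only for "all but finitely many $n$" without identifying the exceptional set, so the promised step of "disposing of the remaining cases directly" cannot be carried out: you do not know which cases remain. The paper resolves this by invoking the effective drilling/filling bounds of Futer--Purcell--Schleimer (in the form given in Baker--Kegel--McCoy): after certifying that $S^3\setminus L$ is hyperbolic and asymmetric with shortest geodesic of length at least $1.48$, any filling of the cusp of $c$ along a slope of normalized length at least $10.1$ is hyperbolic with the same symmetry group, and a cusp-shape computation shows this holds once $n\ge 13$, leaving an explicit finite list $1\le n\le 12$ to certify by direct computation. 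You would need to replace your soft limiting arguments with such quantitative bounds (or some other effective criterion) for the proof to close.
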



\section{On twist families of L--space knots}

We first prepare a general lemma to show that each member of a twist family is an L--space knot.

\begin{lemma}\label{lem:twist-family}
Let $K \cup c$ be a link of an L--space knot $K$ and an unknot $c$ with linking number $w = \ell k(K,c)>1$.
Let $K_n$ be the image of $K$ after $(-1/n)$--surgery on $c$ for integers $n \in \Z$.
If there exists a slope $r \geq 2g(K)-1$ such that $(K \cup c)(r,0)$ is also an L--space, then $K_n$ is an L--space knot for all integers $n  \geq 0$.
\end{lemma}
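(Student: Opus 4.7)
My approach is to show that $(K \cup c)(r, -1/n)$ is an L--space for every $n \ge 0$. By the Rolfsen twist formula this manifold is homeomorphic to $S^3_{r+nw^2}(K_n)$, and since $r + nw^2 > 0$ such an L--space filling exhibits $K_n$ as an L--space knot. Set $Y = S^3_r(K)$; the hypothesis $r \ge 2g(K)-1$ combined with the Ozsv\'ath--Szab\'o surgery characterization of L--space knots makes $Y$ an L--space. Viewing $c$ as a knot in $Y$ with its meridian--longitude basis inherited from $S^3$, write $Y_\alpha(c)$ for the $\alpha$-filling of $Y \setminus \nbhd(c)$. Then $Y_\infty(c) = Y$ and $Y_0(c) = (K \cup c)(r,0)$ are both L--spaces.

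I would then induct on $n \ge 0$ to show that $Y_{-1/n}(c)$ is an L--space, with base case $n=0$ being $Y_\infty(c) = Y$. For the inductive step, note that $\{0, -1/n, -1/(n+1)\}$ is a Farey triangle with $-1/(n+1)$ the Farey mediant of $0$ and $-1/n$. The Heegaard Floer surgery exact triangle therefore yields
\[
\operatorname{rk}\widehat{HF}\bigl(Y_{-1/(n+1)}(c)\bigr) \;\le\; \operatorname{rk}\widehat{HF}\bigl(Y_{-1/n}(c)\bigr) + \operatorname{rk}\widehat{HF}\bigl(Y_0(c)\bigr).
\]
By the inductive hypothesis and the base case, the right-hand side equals $|H_1(Y_{-1/n}(c))| + |H_1(Y_0(c))|$. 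A presentation-matrix calculation for $K \cup c$ gives $|H_1(Y_{p/q}(c))| = |pr - qw^2|$, from which the mediant additivity
\[
|H_1(Y_{-1/(n+1)}(c))| = r + (n+1)w^2 = (r+nw^2) + w^2 = |H_1(Y_{-1/n}(c))| + |H_1(Y_0(c))|
\]
is immediate. Combined with the universal bound $\operatorname{rk}\widehat{HF} \ge |H_1|$, equality must hold throughout, so $Y_{-1/(n+1)}(c)$ is an L--space and the induction closes.

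The main thing requiring care is the slope and framing bookkeeping. Both the Rolfsen identification $(K\cup c)(r,-1/n) = S^3_{r+nw^2}(K_n)$ (with the sign convention chosen so the framing shift is positive) and the homology formula $|H_1(Y_{p/q}(c))| = |pr - qw^2|$ flow from the linking relations $r\mu_K + w\mu_c = 0$ and $[\lambda_c] = w[\mu_K]$ in $H_1(S^3 \setminus (K \cup c))$. Conceptually, what drives the argument is that the rational longitude of $c \subset Y \setminus \nbhd(c)$ has slope $w^2/r > 0$, which sits on the opposite side of $\infty$ from all slopes $-1/n$ with $n \ge 1$; this places the relevant fillings on a common Farey arc and makes the mediant additivity of $|H_1|$ go the right way. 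The remaining Floer input is just the surgery exact triangle together with the $\operatorname{rk}\widehat{HF} = |H_1|$ characterization of L--spaces.
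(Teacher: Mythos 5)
Your proof is correct, but it takes a genuinely different route from the paper's. The paper, after the same first step (citing Ozsv\'ath--Szab\'o to make $Y=K(r)$ an L--space and observing that $\infty$ and $0$ are then both L--space slopes for the image $c_r$ of $c$), invokes the Rasmussen--Rasmussen theorem that the L--space slopes of a knot in a closed $3$--manifold form an interval disjoint from the homological longitude; a homology computation places that longitude at the positive slope $w^2/r$, so every negative slope, in particular every $-1/n$, lies in the interval. You instead run the classical ``L--space triad'' induction: for the Farey triangle $\{0,-1/n,-1/(n+1)\}$ with $-1/(n+1)$ the mediant, the surgery exact triangle plus the additivity $|H_1(Y_{-1/(n+1)})|=|H_1(Y_{-1/n})|+|H_1(Y_0)|$ and the bound $\operatorname{rk}\widehat{HF}\ge|H_1|$ force $Y_{-1/(n+1)}$ to be an L--space. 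Both arguments are sound and rest on the same structural fact (the rational longitude of $c_r$ has positive slope, so the slopes $-1/n$ all sit on the correct side); yours is more self-contained, using only the exact triangle rather than the interval theorem of \cite{RR}, while the paper's is shorter and yields the stronger conclusion that the whole arc of negative slopes consists of L--space slopes. Two small bookkeeping points: your formula $|H_1(Y_{p/q}(c))|=|pr-qw^2|$ as written presumes $r$ is an integer --- for $r=a/b$ it should read $|pa-qbw^2|$, though the mediant additivity persists verbatim --- and the positivity $r+nw^2>0$ (needed both for the fillings to be rational homology spheres and for the final surgery coefficient on $K_n$ to be positive) uses $r\ge 2g(K)-1\ge 1$, i.e.\ implicitly that $K$ is nontrivial, an assumption the paper's proof shares.
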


\begin{proof}
Since $K$ is an L--space knot and $r \geq 2g(K)-1$, the surgered manifold $K(r)$ is an L--space by \cite{OS3}.
Let $c_r$ be the image of $c$ in $K(r)$ and continue to use the standard meridian and longitude of $c$ to parametrize slopes for $c_r$.   By assumption $K(r) = (K \cup c)(r, \infty)$ and $(K \cup c)(r,0)$ are both L--spaces, so both $0$ and $\infty$ are L--space slopes for $c_r$.  By \cite{RR}, the set of L--space slopes for a knot in a closed $3$-manifold is either empty or an interval disjoint from the homological longitude.  A homological computation (akin to \cite[Lemma 3.3]{G}) shows that if $r=p/q$ for coprime integers $p$ and $q$, then the homological longitude of $c_r$ has slope $w^2 \cdot q/p$.  In particular, since $r$ is positive and $w$ is non-zero, the slope of the homological longitude of $c_r$ is also positive.  Hence all negative slopes between the slopes $0$ and $\infty$ are L--space slopes for $c_r$.  In particular, the slopes $-1/n$ for integers $n \geq 0$ are L--space slopes for $c_r$.  Hence  for each integer $n\geq 0$, the filling $(K \cup c)(r,-1/n) = K_n(r+w^2n)$ is an L-space so that the knot $K_n$ is an L--space knot.
\end{proof}

Let us introduce a link $K_0\cup c$ as illustrated in Figure \ref{fig:2ndlink}.
Then $K_n$ is the image of $K_0$ after $(-1/n)$--surgery on $c$.
We note that $K_0$ is a strongly invertible L--space knot $m239$ with genus $11$  in Dunfield's census.

\begin{figure}[h]
\includegraphics*[bb=0 0 393 102, width=10cm]{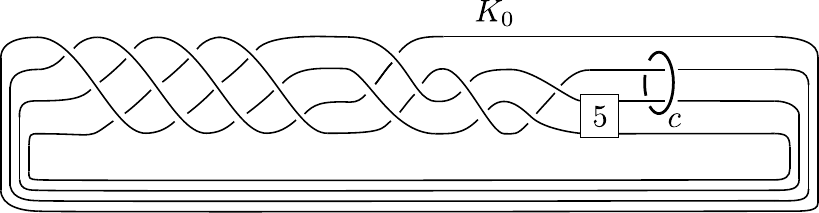}
\caption{The link $K_0\cup c$.
A box with integer $5$ contains right handed $5$ half twists.}
 \label{fig:2ndlink}
\end{figure}

\begin{lemma}\label{lem:lens2}
For the link $K_0\cup c$, $(K_0\cup c)(29,0)$ is a lens space.
\end{lemma}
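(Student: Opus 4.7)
The plan is to verify that $(K_0\cup c)(29,0)$ is a lens space by direct Kirby calculus on the surgery diagram in Figure \ref{fig:2ndlink}, with framing $29$ on $K_0$ and framing $0$ on $c$.

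First, I would exploit the fact that $c$ is unknotted with framing $0$: the associated $2$-handle allows arbitrary handle slides of $K_0$ over $c$, each of which geometrically band-sums a $0$-framed parallel of $c$ into $K_0$ and updates the framing on $K_0$ by the usual handle-slide formula. Equivalently, this lets us push strands of $K_0$ across the disk bounded by $c$ while recording how many such crossings have been performed. My goal is to use a sequence of such slides to unwind the clasps between $K_0$ and $c$ in Figure \ref{fig:2ndlink}, together with the half-twist box visible there, so that $K_0$ is reduced to a very simple curve — ideally an unknot or a torus knot — linking $c$ in a controlled way.

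Second, after $K_0$ has been simplified, the resulting framed two-component diagram should be recognizable either as a framed Hopf link or, after blowing down any unknotted $\pm 1$-framed components produced along the way, as a single unknot with integer framing. Each of these is a standard genus-one surgery description of a lens space, so this step identifies the manifold as $L(p,q)$ for the appropriate $p$ and $q$. As an independent consistency check, the linking matrix
\[
\begin{pmatrix} 29 & w \\ w & 0 \end{pmatrix}
\]
has determinant $-w^2$, so $|H_1((K_0\cup c)(29,0))| = w^2$; this agrees with the order of $H_1$ of the lens space $L(w^2,q)$ that the Kirby reduction should produce.

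The main obstacle is carrying out the explicit handle-slide sequence on the particular diagram of Figure \ref{fig:2ndlink}, since the interaction of $K_0$ with $c$ is somewhat involved (note in particular the $5$ half-twists recorded in the box) and every slide requires careful bookkeeping of both the knot-type of $K_0$ and the induced change of framing. A secondary sanity check is that, since $K_0 = m239$ is an L-space knot of genus $11$, the slope $29 \geq 2g(K_0)-1 = 21$ lies in the positive L-space range of $K_0$, which is consistent with — though not sufficient for — the much stronger conclusion that the link surgery yields a lens space.
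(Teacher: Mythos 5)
Your proposal is a strategy outline rather than a proof: the entire mathematical content of this lemma is the explicit sequence of Kirby--Rolfsen moves, and that is precisely the step you defer (``the main obstacle is carrying out the explicit handle-slide sequence''). Nothing you wrote certifies that the simplification terminates in a genus-one surgery description, and your $H_1$ check, while correct (here $w=\ell k(K_0,c)=2$, so $|H_1|=w^2=4$, matching the paper's answer of $L(4,-1)$), only pins down the order of the first homology, which is far from showing the manifold is a lens space. There is also a technical slip in your description of the moves: a $0$-framed $2$-handle attached along the unknot $c$ does not let you ``push strands of $K_0$ across the disk bounded by $c$'' --- that is the rule for a dotted circle ($1$-handle). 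A handle slide of $K_0$ over $c$ band-sums an entire $0$-framed parallel copy of $c$ into $K_0$ and changes the framing by $\pm 2w$; whether a sequence of such slides actually untangles this particular diagram is exactly what has to be demonstrated.

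The paper's execution is also organized differently from your plan. Rather than sliding $K_0$ over $c$ directly, it first presents $K_0$ as the result of twisting a simpler knot $k$ along two auxiliary unknots $c_1$ and $c_2$, so that $(K_0\cup c)(29,0)$ is re-expressed as the four-component surgery $(k\cup c\cup c_1\cup c_2)(-7,0,-\frac{1}{2},-\frac{1}{2})$ (Figure \ref{fig:2ndsurgery1}); the point is that introducing $-\frac{1}{2}$-framed unknots strips the complicated twisting out of $K_0$ at the cost of extra components, which are then eliminated by the concrete handle slides and Rolfsen twists recorded in Figure \ref{fig:2ndsurgery2}, terminating in $L(4,-1)$. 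To complete your version of the argument you would need either to carry out your proposed slides explicitly or to adopt some such untwisting device; as it stands the proposal identifies the right kind of argument but does not prove the statement.
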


\begin{proof}
We start from a $4$--component link $k\cup c\cup c_1\cup c_2$ as shown in Figure \ref{fig:2ndsurgery1}
(Top Left).
 Then the surgery diagram $(k\cup c\cup c_1\cup c_2)(-7,0,-1/2,-1/2)$ represents
 $(K_0\cup c)(29,0)$ as shown there.
 
\begin{figure}[h]
\includegraphics*[bb=0 0 629 536, width=12cm]{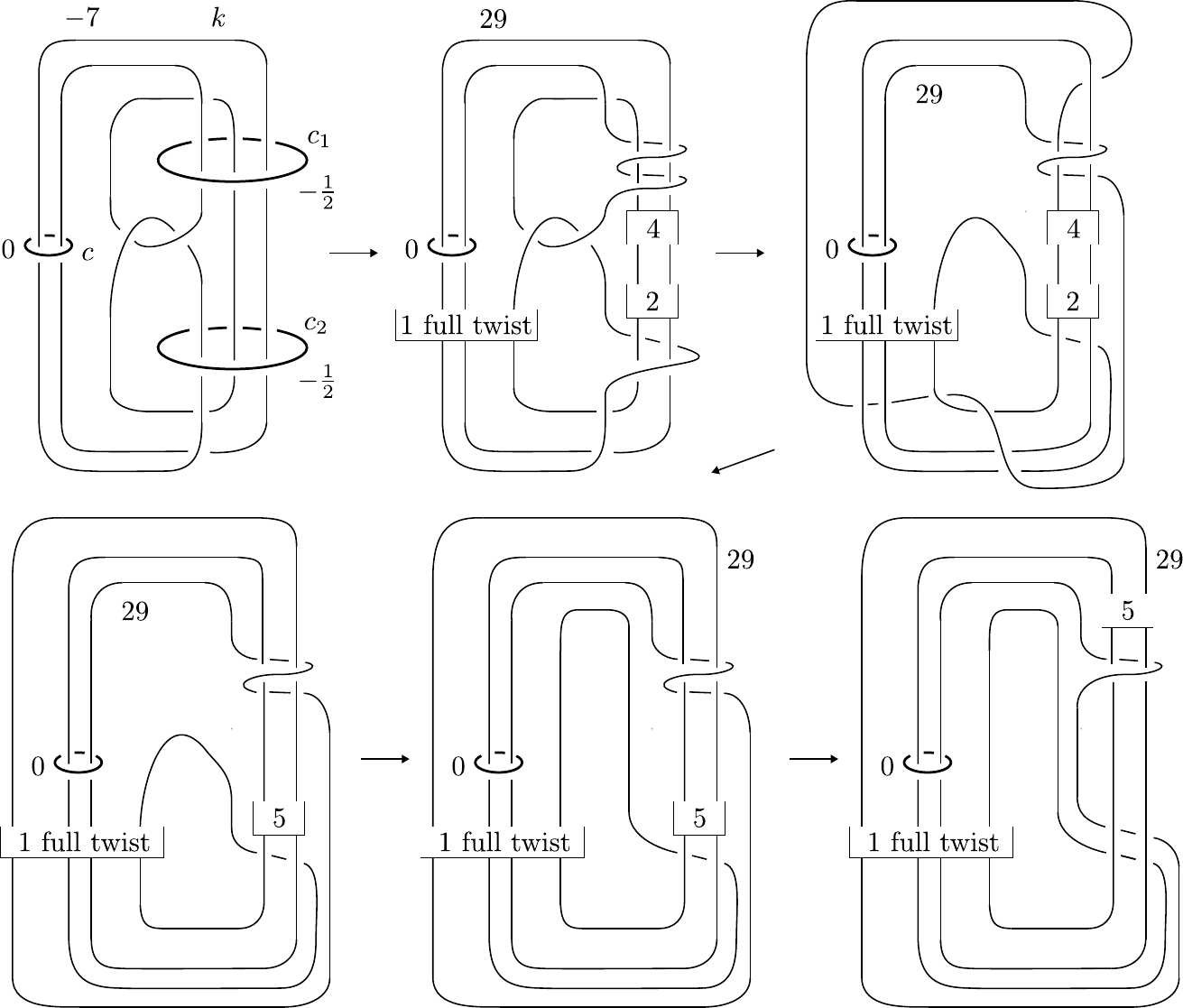}
\caption{The surgery diagram $(k\cup c\cup c_1\cup c_2)(-7,0,-\frac{1}{2},-\frac{1}{2})$
represents $(K_0\cup c)(29,0)$.
A box with integer $i$ contains vertical right handed $i$ half twists.}
 \label{fig:2ndsurgery1}
\end{figure}

On the other hand, Kirby--Rolfsen calculus shows that
the surgery diagram represents a lens space $L(4,-1)$.
See Figure \ref{fig:2ndsurgery2}.
\end{proof}

\begin{figure}[h]
\includegraphics*[bb=0 0 661 600, width=12cm]{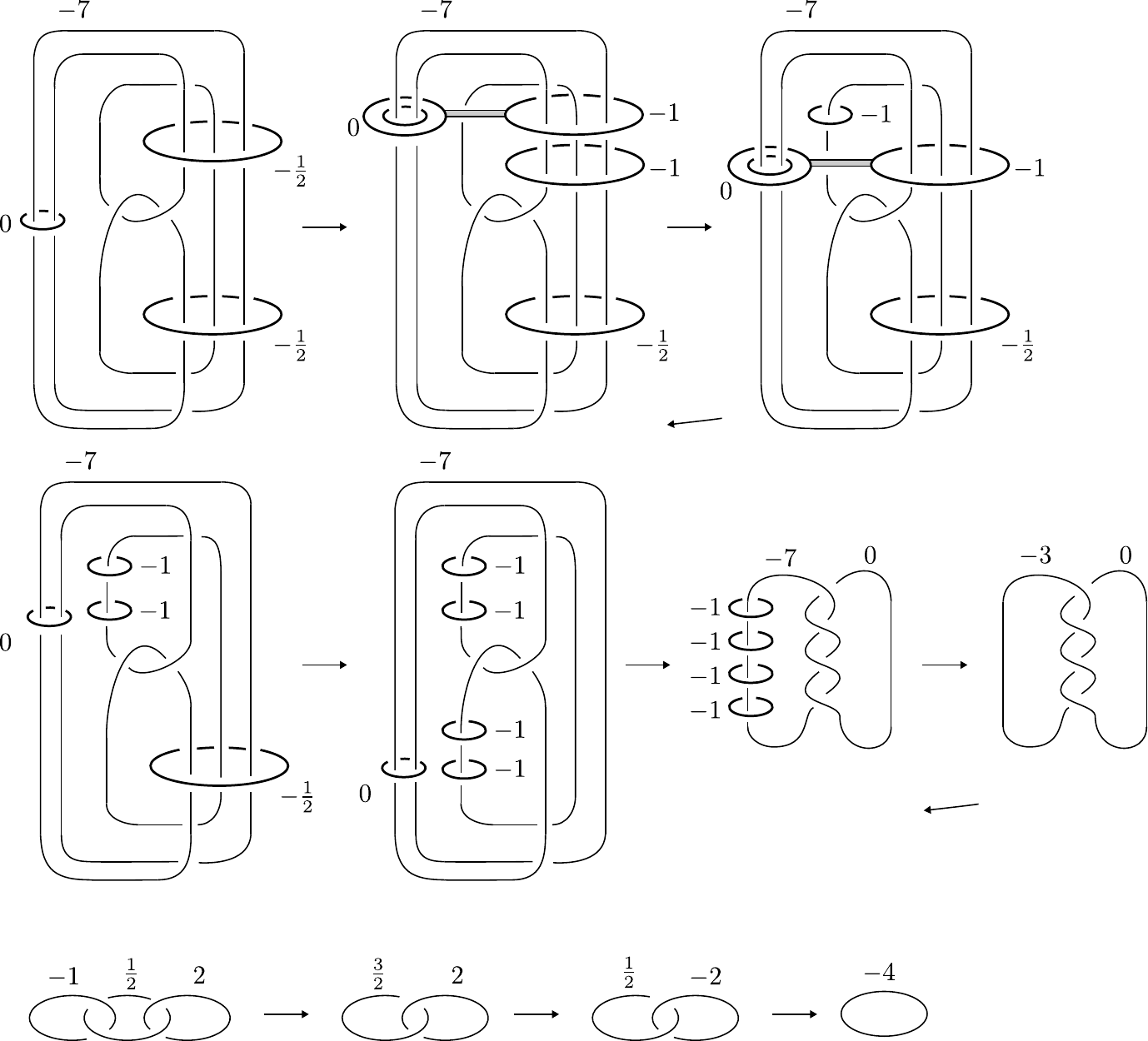}
\caption{A series of handle slides and moves.}
 \label{fig:2ndsurgery2}
\end{figure}

\begin{lemma}\label{lem:lspace}
For each $n\ge 1$, $K_n$ is an L--space knot.
\end{lemma}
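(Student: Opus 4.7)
The plan is to apply Lemma \ref{lem:twist-family} to the link $K_0\cup c$ of Figure \ref{fig:2ndlink} with the slope $r=29$. I would verify each hypothesis of Lemma \ref{lem:twist-family} in turn and then read off the conclusion.

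The sentence preceding Lemma \ref{lem:lens2} records that $K_0$ is the census knot $m239$, a strongly invertible L--space knot of genus $11$. Thus $K_0$ is an L--space knot and $2g(K_0)-1=21$, so the chosen slope $r=29$ satisfies $r\ge 2g(K_0)-1$. From Figure \ref{fig:2ndlink} one sees that $c$ is an unknot and one reads off that $w=\ell k(K_0,c)=2$; in particular $w>1$. The remaining hypothesis is that $(K_0\cup c)(r,0)$ be an L--space, and this is exactly the content of Lemma \ref{lem:lens2}, which identifies this manifold with the lens space $L(4,-1)$.

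All four hypotheses of Lemma \ref{lem:twist-family} being met, the lemma concludes that the image of $K_0$ under $(-1/n)$--surgery on $c$ is an L--space knot for every integer $n\ge 0$. Since by construction $K_n$ is precisely this image, the claim follows for all $n\ge 1$.

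The mathematical work has already been carried out in Lemmas \ref{lem:twist-family} and \ref{lem:lens2}; the present argument is a direct assembly of their conclusions. The only points that still require a moment's care are confirming from Figure \ref{fig:2ndlink} that the linking number satisfies $w>1$ and matching the surgery convention so that the twist family built by $(-1/n)$--surgery on $c$ agrees with the braid family $K_n$ defined by $\beta_n$. Neither should be difficult, but the latter is the spot where a sign or framing error would do the most damage.
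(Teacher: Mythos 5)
Your proposal is correct and follows exactly the paper's route: the paper's proof of this lemma is the single sentence ``This immediately follows from Lemmas \ref{lem:twist-family} and \ref{lem:lens2},'' and your write-up simply makes explicit the verification of the hypotheses (that $K_0=m239$ is an L--space knot of genus $11$ so $r=29\ge 2g(K_0)-1=21$, that $w=\ell k(K_0,c)=2>1$, and that Lemma \ref{lem:lens2} supplies the L--space filling $(K_0\cup c)(29,0)$). Your closing caution about matching the $(-1/n)$--surgery convention with the $\sigma_2^{2n}$ twisting in $\beta_n$ is well placed, and the identification is consistent since $c$ encircles two strands.
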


\begin{proof}
This immediately follows from Lemmas \ref{lem:twist-family} and \ref{lem:lens2}.
\end{proof}


\section{Proof of Theorem \ref{thm:main2}}

\begin{lemma}\label{lem:braid}
The braid index of $K_n$ is $4$.
\end{lemma}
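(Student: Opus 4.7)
The upper bound $b(K_n) \leq 4$ is immediate, since $K_n$ is by construction the closure of the $4$--braid $\beta_n$.

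For the lower bound $b(K_n) \geq 4$, the plan is to apply the Morton--Franks--Williams (MFW) inequality
\[
b(L) \geq \tfrac{1}{2}\bigl(\max\deg_v P_L(v,z) - \min\deg_v P_L(v,z)\bigr) + 1
\]
to the HOMFLY polynomial of $K_n$. Since $\beta_n$ is a positive $4$--braid of writhe $w = 25 + 2n$, the classical Franks--Williams estimates give $\min\deg_v P_{K_n} = w-3$ exactly, together with the a priori upper bound $\max\deg_v P_{K_n} \leq w+3$. If the matching equality $\max\deg_v P_{K_n} = w+3$ can be established, then MFW reads $b(K_n) \geq \tfrac{1}{2}\cdot 6 + 1 = 4$, and combined with the upper bound this yields $b(K_n) = 4$.

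The key structural feature of $\beta_n$ to exploit is that it contains the positive full twist $\Delta^2 = (\sigma_1\sigma_2\sigma_3)^4$ as a factor. For positive braid closures containing a full twist, sharpness of the MFW bound is known, delivering $\max\deg_v P_{K_n} = w+3$ directly. As a concrete alternative, one can compute $P_{K_n}$ inductively in $n$ by applying the HOMFLY skein relation at one crossing in the $\sigma_2^{2n}$ twist region: this produces a two-term recursion whose top $v$-degree can be tracked from the base case $n=1$, namely the knot $t12533$, whose HOMFLY polynomial is tabulated.

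The main obstacle is establishing the equality $\max\deg_v P_{K_n} = w+3$; for an arbitrary positive braid closure this sharpness can fail, and it is precisely the factor $\Delta^2$ in $\beta_n$ that is expected to force it.
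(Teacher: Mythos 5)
Your proposal is correct and matches the paper's argument: the paper likewise observes that $K_n$ is the closure of a positive $4$--braid containing a full twist (``twist positive'') and invokes Franks--Williams and Morton, whose results are exactly the sharpness of the MFW bound that you describe. The extra detail you give about degrees of the HOMFLY polynomial and the optional skein-relation induction is just an unpacking of those citations, not a different route.
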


\begin{proof}
By the definition of $K_n$, it is the closure of a positive $4$--braid which contains one full twist on the $4$ strands.
Such a knot is said to be twist positive in \cite{KM}.
Then  its braid index $4$ by \cite{FW,Mo}.
\end{proof}

\begin{lemma}\label{lem:asymmetry}
The knot $K_n$ is hyperbolic and asymmetric.
\end{lemma}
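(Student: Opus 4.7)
The plan is to reduce the statement to properties of the two-cusped link complement $M = S^3 \setminus N(K_0 \cup c)$ and then transfer those properties to the Dehn fillings $K_n$, whose exteriors are $M(\infty, -1/n)$.

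First I would verify, using SnapPy together with a certified hyperbolic structure (via HIKMOT or Sage's verified computation), that $M$ is hyperbolic. Thurston's hyperbolic Dehn surgery theorem then yields that $S^3 \setminus N(K_n)$ is hyperbolic for all but finitely many $n$; the finite list of exceptions would be checked directly in SnapPy, starting from $K_1 = t12533$, which is already in Dunfield's census as hyperbolic.

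For asymmetry, the strategy is to exploit the symmetry group of $M$. Because $K_0$ has genus $11$ while $c$ is unknotted, no element of the symmetry group can swap the two components, so each symmetry preserves the cusps individually. I would compute the symmetry group in SnapPy and, for each nontrivial element, read off the induced action on $H_1(\partial N(c)) \cong \Z^2$; because this action must preserve the homological longitude of $c$, it is determined up to sign by its effect on the meridian. I expect the outcome to be either trivial, or to consist of elements that fail to preserve the slope $-1/n$ for every $n \geq 1$, so that no nontrivial symmetry of $M$ descends to any $K_n$. To rule out symmetries of $S^3 \setminus N(K_n)$ that do not arise from symmetries of $M$, I would appeal to Kojima's theorem: for $n$ sufficiently large, the core of the $(-1/n)$-filling is the unique shortest geodesic in $S^3 \setminus N(K_n)$, so every isometry preserves it setwise and thus lifts to an isometry of $M$. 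Together these give asymmetry for all but finitely many $n$, and the remaining small $n$ are dispatched by a certified SnapPy symmetry-group computation.

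The main obstacle will be making the ``all but finitely many'' quantifier effective enough to enumerate the exceptional $n$ and then verifying each exception rigorously. A cleaner path, if the symmetry-group computation cooperates, is to show that $M$ is already asymmetric as a link complement with labeled components; the short-geodesic argument then promotes triviality of $\mathrm{Sym}(M)$ to triviality of the symmetry group of $S^3 \setminus N(K_n)$ for all sufficiently large $n$, leaving only a finite initial segment to be checked by hand.
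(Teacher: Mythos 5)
Your overall architecture---certify properties of the two-cusped complement $M = S^3 \setminus N(K_0 \cup c)$ with SnapPy, transfer them to the fillings $M(\infty,-1/n)$ for large $n$, and check a finite initial segment directly---is exactly the paper's. But the step you flag as ``the main obstacle'' is in fact the crux of the proof, and your plan does not contain the idea needed to overcome it. Thurston's hyperbolic Dehn surgery theorem and the short-geodesic/Kojima argument (every isometry of the filled manifold preserves the short core geodesic, hence restricts to an isometry of $M$) both deliver only an ineffective ``for all but finitely many $n$.'' With no computable bound on the exceptional set, you cannot enumerate the cases to be checked by hand, so the argument as written does not terminate in a proof.

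The missing ingredient is an \emph{effective} Dehn filling theorem. The paper invokes \cite[Corollary 2.5]{BKMc}, a repackaging of the effective drilling-and-filling results of Futer--Purcell--Schleimer \cite[Theorem 7.28]{FPS}: once SnapPy certifies that $M$ is hyperbolic, asymmetric, and has shortest geodesic of length at least $1.48$, every filling of the cusp of $c$ along a slope of normalized length at least $10.1$ is again hyperbolic with the same (trivial) symmetry group. Computing the cusp shape $z$ of $c$ turns this into the concrete statement that the normalized length $|nz-1|/\sqrt{\mathrm{Im}(z)}$ exceeds $10.1$ exactly when $n \geq 13$, and the cases $1 \leq n \leq 12$ are then verified individually in SnapPy. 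Note also that this makes your homological bookkeeping unnecessary: since $\mathrm{Sym}(M)$ is trivial, there is nothing to say about cusp swaps or induced actions on $H_1(\partial N(c))$, and the effective theorem controls all symmetries of the filled manifold at once, not merely those induced from $M$.
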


\begin{proof}
Recall the link $L=K_0\cup c$ as shown in Figure \ref{fig:2ndlink}.
Performing $(-1/n)$--surgery on $c$ changes  $K_0$ into $K_n$.
SnapPy tells us that the complement  $N$ of $L$ is hyperbolic and asymmetric.
Also, the shortest geodesic has length at least $1.48$.
Hence, following \cite[Corollary 2.5]{BKMc} which uses the reformulation of \cite[Theorem 7.28]{FPS} given in \cite[Theorem 2.4]{BKMc}, any manifold obtained by filling the cusp of $c$ of $N$ with slopes of normalized length at least $10.1$ will also be hyperbolic and asymmetric. (Though \cite[\S2.4]{BKMc} is stated for one-cusped manifolds, it extends to our situation here.  Indeed, \cite[Theorem 7.28]{FPS} applies to fillings of subsets of cusps of manifolds with multiple cusps.)

From the diagram of $L=K_0\cup c$,
SnapPy gives the cusp shape of $N$ corresponding to $c$
as $z=0.05249786712 + 0.61334493863i$
using the standard meridian-longitude basis.  That is, the parallelogram in $\mathbb{C}$ with vertices $0$, $1$, $z$, and $z+1$ represents the similarity class of the cusp where the meridian corresponds to the edge from $0$ to $1$ and the longitude corresponds to the edge from $0$ to $z$. 

Thus, with this shape, 
the slope $-\frac{1}{n}$ has  length $|nz-1|$ and normalized length
$L(-\frac{1}{n})=\frac{ |nz-1| } { \sqrt{ |\mathrm{Im}(z)| } }$.
Then $L(-\frac{1}{n})\ge 10.1$ only if $n\ge 13$.
A direct check by SnapPy confirms that $K_n$ is also hyperbolic and asymmetric for integers $0<n\le 12$.
(As stated before, $K_0$ is a strongly invertible, hyperbolic L--space knot $m239$.)
\end{proof}

\begin{proof}[Proof of Theorem \ref{thm:main2}]
This follows from Lemmas \ref{lem:lspace}, \ref{lem:braid} and  \ref{lem:asymmetry}.
\end{proof}



\bibliographystyle{amsplain}

\end{document}